\newtheorem{thm}{Theorem}[section]
\newtheorem{lem}[thm]{Lemma}
\newtheorem{conj}[thm]{Conjecture}
\newtheorem{claim}{Claim}
\theoremstyle{definition}
\newtheorem{definition}[thm]{Definition}
\theoremstyle{remark}
\numberwithin{equation}{section}
\newcommand{\bQ}{\mathbb{Q}}
\newcommand{\bP}{\mathbb{P}}
\newcommand{\bF}{\mathbb{F}}
\newcommand\OO{{\mathcal{O}}}
\newcommand\EE{{\mathcal{E}}}
\newcommand\Vol{\text{\rm Vol}}
\newcommand\FF{{\mathcal{F}}}
 \newcommand\mT{{\mathcal{T}}}
\newcommand\lcm{{\text{l.c.m.}}}
\newcommand\mult{{\rm{mult}}}
\newcommand\Nklt{{\rm{Nklt}}}
\newcommand\Nlc{{\rm{Nlc}}}
\newcommand\lct{{\rm{lct}}}
\newcommand\glct{{\rm{glct}}}
\newcommand\Ne{{N_\epsilon}}
\newcommand\Ve{{V_\epsilon}}
\begin{document}

\title{Boundedness of threefolds of Fano type with Mori fibration structures}

\author{Chen Jiang}

\address{Shanghai Center for Mathematical Sciences, Fudan University, Jiangwan Campus, 2005 Songhu Road, Shanghai, 200438, China}
\email{chenjiang@fudan.edu.cn}

\thanks{The author was supported by Grant-in-Aid for JSPS Fellows (KAKENHI No. 25-6549).}

\begin{abstract}
We show boundedness of $3$-folds of $\epsilon$-Fano type with Mori fibration structures. The proof is based on the birational boundedness result in our previous work \cite{Jiang6} combining with arguments in Kawamata \cite{K} and Koll\'ar--Miyaoka--Mori--Takagi \cite{KMMT}.
\end{abstract} 

\keywords{boundedness, log Fano varieties, Mori fibrations}
\subjclass[2010]{14E30, 14J30, 14J45}
\maketitle
\pagestyle{myheadings} \markboth{\hfill  C. Jiang
\hfill}{\hfill Boundedness of $3$-folds of Fano type with Mori fibration structures\hfill}

\tableofcontents

\section{Introduction}
Throughout this paper,  we work over the field of complex numbers $\mathbb{C}$.
See Subsection \ref{section notation} for notation and conventions.

A normal projective variety $X$ is \emph{of $\epsilon$-Fano type} if there exists an effective $\bQ$-divisor $B$ such that $(X, B)$ is an $\epsilon$-klt log Fano pair. $0$-Fano type is also called Fano type for simplicity. The notion of Fano type was introduced by Prokhorov--Shokurov \cite{PS}.

We are mainly interested in the boundedness of varieties of $\epsilon$-Fano type. 
Our motivation  is the following conjecture due to A. Borisov, L. Borisov, and V. Alexeev.

\begin{conj}[{BAB} Conjecture]
Fix an integer $n>0$, $0<\epsilon<1$.
Then the set of all
$n$-dimensional varieties of $\epsilon$-Fano type  is bounded.
\end{conj}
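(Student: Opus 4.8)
The plan is to argue by induction on the dimension $n$, the base case $n=1$ being immediate. For the inductive step the strategy is to first produce a bounded complement, then bound the anti-canonical volume and establish effective birationality of the anti-canonical system to obtain birational boundedness, and finally upgrade this to genuine boundedness. The parameter $\epsilon$ enters as a uniform bound on singularities throughout, and the inductive hypothesis is applied to the general fibers of whatever Mori fibration emerges after running the MMP on a pair $(X,B)$ witnessing that $X$ is of $\epsilon$-Fano type.

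The technical heart of the argument is \emph{boundedness of complements}: I would aim to prove that there is a positive integer $N=N(n,\epsilon)$ such that every $n$-dimensional variety $X$ of $\epsilon$-Fano type admits an effective $\bQ$-divisor $\Gamma$ with $N(K_X+\Gamma)\sim 0$ and $(X,\Gamma)$ log canonical. To produce such an $N$-complement uniformly, one runs the MMP to reduce to the case where $X$ carries a Mori fibration $X\to Z$; on the general fiber $F$ the desired complement exists with bounded $N$ by the $(n-1)$-dimensional statement, and the remaining problem is to lift this fiberwise complement to all of $X$. This lifting is governed by the canonical bundle formula together with the theory of generalized polarized pairs on the base $Z$, whose own complements are controlled by a further lower-dimensional instance of the same statement.

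Granting the $N$-complement, the second step is \emph{effective birationality}: there exists $m=m(n,\epsilon)$ so that $|-mK_X|$ defines a birational map onto its image. Combining the bounded complement $\Gamma$ with the $\epsilon$-klt hypothesis and the ACC for log canonical thresholds of Hacon--McKernan--Xu, one controls $\Vol(-K_X)$ from both sides and then invokes the standard dichotomy between volume growth and birationality of pluri-anti-canonical maps to extract a uniform $m$. The resulting maps $\varphi_m\colon X\dashrightarrow \bP^M$ place all such $X$ inside a bounded family, giving birational boundedness.

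The final step rigidifies birational boundedness into boundedness. On a fixed bounded birational model one constructs a bounded boundary and uses the $\epsilon$-klt condition, again via ACC, to bound the discrepancies appearing in the crepant comparison with $X$, thereby recovering each $X$ as a relative log canonical model inside a bounded family of pairs. \emph{I expect the decisive obstacle to be the boundedness of complements}: bounding the denominator $N$ uniformly across the entire class of $n$-dimensional $\epsilon$-Fano type varieties demands an intricate induction coupling the MMP, the canonical bundle formula, and the lifting of complements through Mori fibrations, and it is precisely here that one must deploy the full apparatus of the modern minimal model program rather than the dimension-specific tools---$3$-fold classification and the explicit bounds of Kawamata and of Koll\'ar--Miyaoka--Mori--Takagi---available in low dimension.
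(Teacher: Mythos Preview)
The statement you are addressing is the BAB Conjecture, and the paper does \emph{not} prove it. It is stated there as an open conjecture; the paper's main theorem (Theorem~\ref{main}) establishes only the special case of $3$-folds of $\epsilon$-Fano type admitting a Mori fibration structure, and the introduction says explicitly that ``in higher dimension, BAB Conjecture still remains open.'' So there is no ``paper's own proof'' to compare against.

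What you have written is not a proof but a program, and you essentially say so yourself when you flag boundedness of complements as ``the decisive obstacle.'' That obstacle is real: at the time of this paper, producing a uniform $N$-complement for all $n$-dimensional varieties of $\epsilon$-Fano type was an open problem, and the lifting of fibrewise complements through the canonical bundle formula you sketch is precisely the hard part that requires new ideas (this is the content of Birkar's later work). Your outline also glosses over two further nontrivial points: bounding $\Vol(-K_X)$ from above in arbitrary dimension is itself a major step (it does not follow from ACC for lct alone), and the passage from birational boundedness to boundedness for $\epsilon$-klt pairs needs more than a crepant comparison---one has to control log discrepancies over a bounded family and extract a bounded moduli, which again is substantial.

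By contrast, the paper's actual argument for its $3$-dimensional special case is entirely different in character and makes no use of complements. It combines an explicit birational boundedness input (Theorem~\ref{NV}) with Miyaoka's pseudo-effectivity criterion for $c_2$ of a generically semi-positive sheaf, Reid's Riemann--Roch formula for terminal $3$-folds, and the classification of $3$-dimensional terminal singularities to bound the Gorenstein index, then finishes with Koll\'ar's effective base-point-freeness. None of these tools generalize to dimension $n$, which is why the paper makes no claim beyond the stated special case.
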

 {BAB} Conjecture is one of the most important conjectures in birational geometry. It is related to the termination of flips (cf. \cite{B, BS}) and has interesting application for the Jordan property of Cremona groups (cf. \cite{ProkhorovShramov}). Besides, varieties of Fano type form a fundamental class in birational geometry according to Minimal Model Program and have many interesting properties (cf. \cite{CG, GOST, Krylov}). Hence it is very interesting to understand the basic properties of this class, such as boundedness.

{BAB} Conjecture in dimension two was proved  by Alexeev \cite{AK2} with a simplified argument by Alexeev--Mori \cite{AM}. In higher dimension, BAB Conjecture  still remains open. There are only  some partial boundedness results (cf.  \cite{BB, KMM92, K, KMMT, AB}). 

We recall the following theorem proved in \cite{Jiang} by using Minimal Model Program.
\begin{thm}[{cf. \cite[Proof of Theorem 2.3]{Jiang}}]\label{thm MFS}Fix an integer $n>0$ and $0<\epsilon<1$. 
Every $n$-dimensional variety $X$ of $\epsilon$-Fano type is birational to an $n$-dimensional variety  $X'$ of $\epsilon$-Fano type  with a Mori fibration structure.
\end{thm}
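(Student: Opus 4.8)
The plan is to produce $X'$ by running a suitable Minimal Model Program on $X$, carrying an $\epsilon$-klt log Fano structure along with it. Fix an effective $\bQ$-divisor $B$ with $(X,B)$ an $\epsilon$-klt log Fano pair. Then $X$ is of Fano type, hence a Mori dream space, so every Minimal Model Program on $X$ terminates. Observe also that $K_X+B$ is \emph{not} pseudo-effective, since $-(K_X+B)$ is ample; equivalently $-K_X$ is big, so $X$ certainly does not have $K_X$ nef.

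First I would run a $(K_X+B)$-Minimal Model Program, with scaling of a general ample $\bQ$-divisor $H$ chosen small enough that $(X,B+H)$ is still $\epsilon$-klt and $K_X+B+H$ is nef. Every step $X_i\dashrightarrow X_{i+1}$ of this program is $(K_{X_i}+B_i)$-negative, where $B_i$ is the strict transform of $B$; hence the log discrepancies of $(X_i,B_i)$ do not decrease, so $(X_{i+1},B_{i+1})$ remains $\epsilon$-klt, while $-(K_{X_i}+B_i)$ stays big and each $X_i$ stays of Fano type (Fano type is preserved under the steps of a Minimal Model Program). Since $K_X+B$ is not pseudo-effective, the program cannot terminate with a minimal model (that would force $K+B$ to be nef, hence pseudo-effective), so it terminates with a Mori fiber space $f\colon X'\to Z$ with $\dim Z<\dim X$, where $Z$ may be a point. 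At this stage $X'$ carries a Mori fibration structure, $(X',B')$ is $\epsilon$-klt, $X'$ is of Fano type, $\rho(X'/Z)=1$, and $-(K_{X'}+B')$ is $f$-ample and big.

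What remains is to check that $X'$ is of $\epsilon$-Fano type, i.e.\ that \emph{some} boundary makes it $\epsilon$-klt log Fano globally, not merely relatively over $Z$. If $Z$ is a point this is immediate, since $-(K_{X'}+B')$ is then ample. In general I would perturb the boundary: as $-(K_{X'}+B')$ is $f$-ample and big, it fails to be ample only along finitely many extremal rays of $\overline{\mathrm{NE}}(X')$ other than the fibre class, and one adds to $B'$ a small multiple of an appropriate effective $\bQ$-divisor (transverse to $\Supp B'$ and suitably negative on those rays, arising for instance from a complement on $Z$ pulled back and corrected) to arrange that $-(K_{X'}+\cdot)$ becomes globally ample. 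I expect this last step to be the main obstacle: a crude perturbation only yields $\epsilon'$-Fano type for some smaller $\epsilon'$, so the perturbing divisor must be chosen so that no coefficient of $B'$ already close to $1-\epsilon$, and no log discrepancy already close to $\epsilon$, is pushed across the threshold --- and it is precisely here that one genuinely uses that both $X'$ and the base $Z$ are of Fano type.
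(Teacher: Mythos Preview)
The paper does not prove this theorem here; it is quoted from \cite{Jiang}. Your outline---run an MMP and track the boundary---is the right shape, but there is a gap you did not notice, in addition to the one you flagged.

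In this paper a \emph{Mori fibration} requires the total space to have $\bQ$-factorial \emph{terminal} singularities (see Subsection~\ref{section notation}). A $(K_X+B)$-MMP preserves the $\epsilon$-klt property of the \emph{pair}, but the underlying variety of the output is only guaranteed to be klt, not terminal. Already in dimension two, $\bP(1,1,2)$ with $B=0$ is an $\epsilon$-klt log Fano of Picard number one for every $\epsilon<1$; your MMP terminates immediately with $\bP(1,1,2)\to\mathrm{pt}$, which is not a Mori fibration in the sense used here because the surface is not terminal. The fix is to first take a $\bQ$-factorial terminalization $\pi:Y\to X$ and then run a $K_Y$-MMP rather than a $(K_Y+B_Y)$-MMP: since $-K_Y=\pi^*(-K_X)+(\text{effective exceptional})$ is big, this terminates in a Mori fiber space $X'\to Z$ with $X'$ genuinely $\bQ$-factorial terminal.

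For the obstacle you did identify---upgrading the $\epsilon$-klt pair on $X'$ to an $\epsilon$-klt \emph{log Fano} structure---your proposed perturbation by divisors pulled back from $Z$ is hard to control and, as you anticipate, tends to yield only $\epsilon'$-Fano type for some $\epsilon'<\epsilon$. A cleaner device is to fix the boundary \emph{before} the MMP: choose a general $A\sim_\bQ -(K_X+B)$ so that $(X,B+A)$ stays $\epsilon$-klt with $K_X+B+A\sim_\bQ 0$, pull this back to $Y$ to get an $\epsilon$-klt pair $(Y,\Delta_Y)$ with $K_Y+\Delta_Y\sim_\bQ 0$, and note that every step of the $K_Y$-MMP is then \emph{crepant} for this pair, so $(X',\Delta')$ remains $\epsilon$-klt with $K_{X'}+\Delta'\sim_\bQ 0$. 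From here one uses that $X'$ is terminal and of Fano type to produce the required ample boundary; the details are in \cite{Jiang}.
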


We will recall the proof in Section \ref{section proof 1}. Here we emphasize that having a Mori fibration structure implies that having at most $\bQ$-factorial terminal singularities by our definition (see Subsection \ref{notation}). According to this theorem, it is important and interesting to investigate varieties of $\epsilon$-Fano type with Mori fibration structures. In fact, in the proof of BAB Conjecture in dimension two (cf. \cite{AK2, AM}), the first step is to classify (and bound) all surfaces  of $\epsilon$-Fano type with Mori fibration structures, which are just projective plane or Hirzebruch surfaces $\bF_n$ with $n<2/\epsilon$. Therefore, we are interested in the boundedness of  $3$-folds of $\epsilon$-Fano type with Mori fibration structures, as the first step towards BAB Conjecture in dimension three.

The following is our main theorem.
\begin{thm}\label{main}
Fix $0<\epsilon<1$. The set of all
$3$-folds of  $\epsilon$-Fano type with Mori fibration structures is  bounded.
\end{thm}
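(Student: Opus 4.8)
The plan is to reduce the boundedness of a $3$-fold $X$ of $\epsilon$-Fano type admitting a Mori fibration $f\colon X\to S$ to a case analysis according to $\dim S\in\{0,1,2\}$, and to bound each case using the birational boundedness result of \cite{Jiang6} together with the classical techniques of Kawamata \cite{K} and Koll\'ar--Miyaoka--Mori--Takagi \cite{KMMT}. First I would record the terminal reduction: after a small $\bQ$-factorialization and running a suitable MMP we may assume $X$ is $\bQ$-factorial with terminal singularities and Picard number $\rho(X/S)=1$, replacing $(X,B)$ by a new $\epsilon'$-klt log Fano pair for a slightly smaller $\epsilon'$ depending only on $\epsilon$ and $\dim X=3$; the point is that the class of such $X$ is bounded iff the original class is. By \cite{Jiang6} there is a fixed $m=m(\epsilon)$ such that $|-mK_X|$ defines a birational map, and moreover $\Vol(-K_X)$ is bounded above; the task is to promote this birational boundedness to honest boundedness by bounding the singularities and the anticanonical volume from below.

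The case $\dim S=0$ is exactly the setting of \cite{KMMT}: $X$ is a terminal $\bQ$-Fano threefold with $\rho(X)=1$, and their main theorem already gives boundedness (they bound the index, hence $(-K_X)^3$ from below, and combine with an upper bound on the volume). For $\dim S=2$ I would use that $f\colon X\to S$ is a conic bundle onto a normal surface $S$ which is itself of $\delta$-Fano type for some $\delta=\delta(\epsilon)$ by adjunction/canonical bundle formula for the generic fiber $\bP^1$; hence $S$ lies in a bounded family by the two-dimensional BAB of Alexeev \cite{AK2, AM}. It then remains to bound the conic bundle structure, i.e.\ the discriminant divisor $\Delta\subset S$ and the relevant sheaves $f_*(-K_X)$; here one uses that $-K_X$ is $f$-ample and $\epsilon$-log terminal to bound the degree of $\Delta$ and the splitting type of the rank-$3$ bundle defining the conic bundle, so $X$ embeds in a $\bP^2$-bundle over a bounded $S$ with bounded equation. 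For $\dim S=1$, $S\cong\bP^1$ and the generic fiber $X_\eta$ is a del Pezzo surface of $\epsilon$-Fano type, hence of bounded degree $d\in\{1,\dots,9\}$; I would bound the variation of the fibration by bounding the anticanonical algebra $\bigoplus_m f_*\OO_X(-mK_X)$, using that $-K_X$ is nef (after the MMP, $-K_X$ is $f$-ample and $X$ is Fano type over a curve) together with the upper bound on $\Vol(-K_X)$ to bound the "instability" of the relevant vector bundles on $\bP^1$, and then invoke boundedness of the base curve and of the fiberwise del Pezzo.

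The main obstacle I expect is the $\dim S=1$ and $\dim S=2$ cases: passing from birational boundedness (a fixed pluri-anticanonical system mapping birationally, plus a volume bound) to actual boundedness of the Mori fiber space requires controlling the singularities of $X$ uniformly and controlling how the fibration can degenerate — concretely, bounding the discriminant locus in the conic bundle case and bounding the Harder--Narasimhan data of $f_*\OO_X(-mK_X)$ over $\bP^1$ in the del Pezzo fibration case. This is precisely where Kawamata's method \cite{K} of bounding invariants along the fibration, combined with the effective birationality and volume estimates of \cite{Jiang6}, does the work; the surface base is handled cleanly by \cite{AK2, AM}, and once the base and the generic fiber are bounded and the "twisting" data is bounded, a standard argument packaging everything into a bounded family (e.g.\ via a bounded projective embedding of $X$ by a fixed multiple of $-K_X$) finishes the proof.
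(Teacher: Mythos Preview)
Your proposal correctly identifies the case split on $\dim S$ and correctly disposes of $\dim S=0$ via \cite{K}/\cite{KMMT}. For $\dim S>0$, however, your approach diverges from the paper's and, as written, has a genuine gap.

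The paper does \emph{not} attempt to bound the fibration data (discriminant of the conic bundle, Harder--Narasimhan filtration of $f_*\OO_X(-mK_X)$) as you propose. It treats $\dim S=1$ and $\dim S=2$ uniformly by bounding the Cartier index of $K_X$ directly. First, what \cite{Jiang6} actually furnishes is not a birational pluri-anticanonical system $|-mK_X|$ but an \emph{ample} divisor of the shape $-K_X+N_\epsilon G$ with bounded cube, where $G$ is a general fiber (if $\dim S=1$) or the pullback of a suitable very ample divisor on $S$ (if $\dim S=2$); note $-K_X$ itself need not be nef when $\dim S>0$, so your formulation loses exactly the handle one needs. The key construction is then the rank-four sheaf $\FF_X=T^1_X\oplus\OO_X(N_\epsilon G)$: it is generically semi-positive (by the KMMT result on $T^1_X$) with nef $c_1$, so Miyaoka's criterion makes $c_2(\FF_X)=c_2(X)-N_\epsilon K_X\cdot G$ pseudo-effective. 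Pairing with the ample class $-K_X+N_\epsilon G$ and doing the intersection bookkeeping on $G$ yields a uniform lower bound on $(-K_X)\cdot c_2(X)$. Reid's Riemann--Roch for terminal $3$-folds then bounds every local index $r_i$, hence the Cartier index of $K_X$; with a bounded-index ample Cartier divisor of bounded volume in hand, Koll\'ar's effective basepoint-freeness and Chow finish.

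Your route---bound the base, the discriminant, and the vector-bundle data of the fibration---is a natural picture, but nothing in your outline bounds the local indices of the terminal singularities of $X$, and without that you cannot convert birational data into a projective embedding of bounded degree. Concretely, even over a fixed surface $S$ with discriminant of fixed degree, a standard conic bundle can acquire terminal quotient singularities of arbitrarily high index along fibers over $\Delta$; likewise, bounding the HN slopes of $f_*\OO_X(-mK_X)$ on $\bP^1$ from $\Vol(-K_X)$ alone does not control the singular fibers. You correctly flag ``controlling the singularities of $X$ uniformly'' as the obstacle and gesture at Kawamata's method, but the content of that method here is precisely the $c_2$-plus-Reid argument above, and it only runs because of the twist by $N_\epsilon G$ that makes $c_1(\FF_X)$ nef. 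Without supplying that (or an equivalent index bound), the proposal remains a plan rather than a proof. As a minor aside, your ``terminal reduction'' step is unnecessary: by the definition of Mori fibration used here, $X$ is already $\bQ$-factorial terminal.
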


For interesting examples in dimension three, we refer to \cite{Lin}, where $3$-folds of Fano type with conic bundle structures were constructed, which proves the birational unboundedness of   $3$-folds of $0$-Fano type. 
\subsection{Sketch of the proof}
Let $X$ be a $3$-fold of $\epsilon$-Fano type with a Mori fibration $f:X\to Z$. If $\dim Z=0$, then $X$ is a $\bQ$-factorial terminal Fano $3$-fold of Picard number one, which is bounded by Kawamata \cite{K}. So we only need to consider the case when $\dim Z>0$. 

We recall the following theorem from \cite{Jiang6}, by which we proved the birational boundedness of $3$-folds of $\epsilon$-Fano type.
\begin{thm}[{cf. \cite[Proof of Corollaries 1.5, 1.8]{Jiang6}}]\label{NV}
Fix $0<\epsilon<1$. Then there exist positive integers $N_{\epsilon}$ and $V_{\epsilon}$ depending only on $\epsilon$, with the following property:

If $X$ is a $3$-fold of $\epsilon$-Fano type with a Mori fibration $f:X\to Z$.
\begin{itemize}
\item[(1)] If $\dim Z=1$ (i.e. $Z=\bP^1$), take a general fiber $F$ of $f$, then

\begin{itemize}
    \item[(1-1)]   $-K_X+N_\epsilon F$ is ample;

    \item[(1-2)] $(-K_X+N_\epsilon F)^3\leq V_\epsilon$.
  \end{itemize}
\item[(2)] If $\dim Z=2$,  then there exists a very ample divisor $H$ on $Z$ such that

\begin{itemize}
    \item[(2-1)]   $-K_X+N_\epsilon f^*H$ is ample;

    \item[(2-2)] $(-K_X+N_\epsilon  f^*H)^3\leq V_\epsilon$.
  \end{itemize}
\end{itemize}
\end{thm}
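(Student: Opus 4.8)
The plan is to construct, uniformly over the whole family, a complete linear system on $X$ whose associated rational map is birational onto its image and whose top self-intersection number is bounded above by a constant depending only on $\epsilon$; the birationality statements (1-1), (2-1) and the numerical bounds (1-2), (2-2) are then exactly these two properties. The split into $\dim Z=1$ and $\dim Z=2$ reflects whether the general fibre of $f$ is a surface or a curve. In both cases the summand $-3K_X$ (resp.\ $-2K_X$) is designed to make the system birational along a general fibre, while $N_\epsilon F$ (resp.\ $N_\epsilon f^*H$), being pulled back from the base, separates distinct general fibres; the work lies in combining the two directions and in making $N_\epsilon$ and the resulting volume uniform.

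First I would bound the fibre. A general fibre $F$ is again of $\epsilon$-Fano type, of dimension $\dim X-\dim Z$: by adjunction $-K_X|_F=-K_F$ is ample, so when $\dim Z=1$ the surface $F$ is an $\epsilon$-klt del Pezzo surface and when $\dim Z=2$ it is a rational curve with $-K_X\cdot F=2$. By Alexeev's proof of the BAB conjecture in dimension two these surfaces $F$ form a bounded family, so that $(-K_F)^2=(-K_X)^2\cdot F$ is bounded; moreover the general fibre is a mildly singular del Pezzo surface on which $|-3K_F|$ already defines a birational map, and in the curve case $-2K_X|_F$ has degree $4$ on $\bP^1$, hence is very ample. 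I would bound the base similarly: as $X$ is of Fano type and $f$ is a contraction, $Z$ is of Fano type, and, controlling its singularities through adjunction for fibre spaces, $Z$ is a $\delta$-klt log Fano variety for some $\delta=\delta(\epsilon)>0$. When $\dim Z=2$ this makes $Z$ bounded by Alexeev again, so one may fix a very ample $H$ on $Z$ with $H^2$ bounded.

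Next I would combine the two directions by pushing forward. The sheaf $\EE=f_*\OO_X(-3K_X)$ (resp.\ $f_*\OO_X(-2K_X)$) is torsion free of bounded rank $h^0(F,-3K_F)$, and I seek a uniform $N_\epsilon$ for which the twist $\EE\otimes\OO_{\bP^1}(N_\epsilon)$ (resp.\ $\EE\otimes\OO_Z(N_\epsilon H)$) is globally generated and the restriction $H^0(X,-3K_X+N_\epsilon F)\to H^0(F,-3K_F)$ is surjective for general $F$. Surjectivity follows from Kawamata--Viehweg vanishing applied to the big and nef $\bQ$-divisor $-3K_X+(N_\epsilon-1)F$ once $N_\epsilon$ exceeds an explicit threshold, and global generation together with birationality on fibres and separation of general fibres then gives birationality of the full system. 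Because $F^2\equiv 0$ the volume is transparent: for example
\begin{equation*}
(-3K_X+N_\epsilon F)^3=27(-K_X)^3+27\,N_\epsilon\,(-K_F)^2,
\end{equation*}
and the expansion when $\dim Z=2$ involves only $(-K_X)^3$, $N_\epsilon$, $H^2$ and $(-K_X)^2\cdot f^*H$. Thus both conclusions reduce to bounding $N_\epsilon$ and the anticanonical volume $(-K_X)^3=\Vol(-K_X)$.

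The main obstacle is precisely this reduction, and it is where boundedness of the individual fibres and of the base does not suffice: one must also control how the fibres are twisted along $Z$. Both the negativity of $\EE$ along the base (which governs the threshold $N_\epsilon$) and the upper bound for $\Vol(-K_X)$ measure this twisting, already visible for scrolls $\bP(\OO\oplus\OO(a)\oplus\OO(b))\to\bP^1$, where one must show that the $\epsilon$-Fano type hypothesis forces $a$ and $b$ to be bounded. I would treat the two together by exploiting the $\epsilon$-klt log Fano structure $(X,B)$: bounding from above the intersection of $-K_X$ with curves in the fibres and with multisections, comparing $-K_X$ against the pullback of an ample divisor from the bounded base, and running a relative minimal model program over $Z$ to reduce everything to the already bounded fibre and base data. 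This last point is the technical heart of the argument and is exactly the input imported from the author's previous work.
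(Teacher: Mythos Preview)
This theorem is not proved in the present paper at all: as the citation in its header indicates, it is quoted wholesale from the author's earlier preprint, and the paper uses it as a black-box input. There is therefore no proof here against which to compare your sketch. Your outline---bound the general fibre via low-dimensional BAB, bound the base via Fano-type inheritance and adjunction for fibre spaces, lift birationality from fibres by vanishing, and finally control the twisting along the base to make $N_\epsilon$ and $\Vol(-K_X)$ uniform---is a plausible reconstruction of how such a statement would be established, and you correctly isolate the last step as the real difficulty. But your closing sentence understates matters: it is not merely the ``technical heart'' that is imported from the previous work, it is the entire theorem, birationality claims and volume bounds alike. The present paper contributes nothing toward proving this statement; its own arguments begin only after it is in hand.
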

We will recall the proof in Section \ref{section proof 2}. By this theorem, to show the boundedness, it suffices to show the boundedness of Gorenstein indices due to Koll\'ar's effective base point free theorem (see Subsection \ref{subsection proof main}).
We will explain the idea of bounding the Gorenstein indices.

For convenience, we define $G$ and $\FF_X$ as following.

\begin{definition}\label{GF}
Let $X$ be a $3$-fold of $\epsilon$-Fano type with a Mori fibration $f:X\to Z$ such that $\dim Z>0$. 
Keep the notation in Theorem \ref{NV}.
\begin{enumerate}
 \item Define the projective smooth surface $G$ to be a general fiber  $F$ (resp. a general element of $|f^*(H)|$) if $\dim Z=1$ (resp. $\dim Z=2$).
\item Define the torsion free sheaf $\FF_X:=\mT_X\oplus \OO_X(N_\epsilon G)$, where $\mT_X$ is the tangent sheaf on $X$.
\end{enumerate}
\end{definition}
We remark that $G$ is a del Pezzo surface (resp. conic bundle over a general $H$) if $\dim Z=1$ (resp. $\dim Z=2$).

Following the idea of Koll\'ar--Miyaoka--Mori--Takagi \cite{KMMT}, we can prove the pseudo-effectivity of $c_2(\FF_X)$.
\begin{thm}\label{F pseff}
Fix $0<\epsilon<1$. Let $X$ be a $3$-fold of $\epsilon$-Fano type with a Mori fibration $f:X\to Z$ such that $\dim Z>0$. 
Keep the notation in Definition \ref{GF}.  Then
$c_2(\FF_X)$ is pseudo-effective. 
\end{thm}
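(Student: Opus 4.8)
The plan is to exploit the duality between the nef cone and the cone of pseudo-effective $1$-cycles: since $\overline{NE}(X)$ is dual to the nef cone, it suffices to show $c_2(\FF_X)\cdot D\ge 0$ for every nef $\bQ$-Cartier divisor $D$, and by continuity one may take $D$ ample. As $\OO_X(\Ne G)$ is a line bundle, the Whitney formula gives, as classes of $1$-cycles,
\[
c_2(\FF_X)=c_2(T^1_X)+\Ne\,(-K_X)\cdot G ,
\]
using $c_1(T^1_X)=-K_X$. Hence it is enough to prove that both summands are pseudo-effective: (i) $c_2(T^1_X)$, i.e.\ $c_2(X)$; and (ii) the $1$-cycle $(-K_X)\cdot G$.

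For (ii) I would argue geometrically. By construction $G$ is a general member of a base-point-free and movable linear system (a general fibre of $f$, resp.\ a general member of $|f^*H|$), and since $X$ is terminal $\Sing X$ is finite; hence the general such $G$ is a smooth projective surface contained in $X\setminus\Sing X$. Moreover $-K_X$ is big, because $X$ is of $\epsilon$-Fano type, so $-K_X$ is $\bQ$-linearly equivalent to an ample plus an effective $\bQ$-divisor; as $G$ is a general member of a movable system, $G$ is not contained in the augmented base locus of $-K_X$, so $(-K_X)|_G$ is a big $\bQ$-divisor on the surface $G$, in particular $\bQ$-linearly equivalent to an effective one. Pushing forward along $G\hookrightarrow X$, the $1$-cycle $(-K_X)\cdot G$ is a limit of effective $1$-cycles on $X$, hence pseudo-effective. (When $\dim Z=1$ this is clear at once, since then $(-K_X)|_G=-K_G$ is ample, $G$ being a del Pezzo surface.)

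Step (i) is the crux, and here I would follow the idea of Koll\'ar--Miyaoka--Mori--Takagi \cite{KMMT}, which rests on Miyaoka's generic semi-positivity. Fixing a nef $D$, we must show $c_2(X)\cdot D\ge 0$. Pass to a resolution $\pi\colon Y\to X$, take a very general complete intersection curve $C\subset Y$ cut out by sufficiently ample divisors, and study the Harder--Narasimhan filtration of $\Omega^1_Y|_C$: since $X$ is of Fano type it is rationally connected, which prevents $\Omega^1_Y$ from acquiring a quotient of negative degree on $C$, so $T_Y|_C$ is generically nef and Miyaoka's inequality gives $c_2(Y)\cdot\pi^*D\ge 0$; descending to $X$, where $c_2(Y)-\pi^*c_2(X)$ is supported over the finite set $\Sing X$ and (as $X$ is terminal) contributes in a controlled way against $\pi^*D$, yields $c_2(X)\cdot D\ge 0$. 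One may instead avoid the resolution and work directly with the Mori fibration $f\colon X\to Z$, restricting $\FF_X$ to the general surface $G$ (whose deformations cover $X$) and invoking a Bogomolov-type inequality on $G$, using that $c_1(\FF_X|_G)$ is controlled. I expect step (i) — making Miyaoka's argument precise in the singular setting with $-K_X$ possibly not nef, and the descent through $\Sing X$ — to be the main obstacle; the reduction and step (ii) are comparatively routine.
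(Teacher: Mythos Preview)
Your decomposition and step (ii) are fine, but step (i) is where the approach breaks down, and the paper's proof is designed precisely to avoid it. Miyaoka's criterion (Theorem~\ref{c2 pseff}) requires $c_1(\EE)$ to be \emph{nef}; for $\EE=T_X^1$ this is $-K_X$, which here is only big. Passing to a resolution $Y$ makes matters worse: $-K_Y$ is certainly not nef, so even granting generic semi-positivity of $T_Y$ you cannot conclude $c_2(Y)\cdot\pi^*D\ge 0$, and the correction $\pi_*c_2(Y)-c_2(X)$ has no obvious sign against an arbitrary nef $D$. The paper never asserts that $c_2(X)$ is pseudo-effective; on the contrary, Theorem~\ref{bound of r}(1) only bounds $(-K_X)\cdot c_2(X)$ from below by the possibly negative constant $-M_\epsilon$, and the whole point of introducing $\FF_X$ is to compensate for $-K_X$ not being nef.

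The paper applies Miyaoka's theorem directly to $\FF_X$, without splitting. Since $T_X^1$ is generically semi-positive (Theorem~\ref{T nef}) and $\Ne G$ is nef, the direct sum $\FF_X=T_X^1\oplus\OO_X(\Ne G)$ is again generically semi-positive; and $c_1(\FF_X)=-K_X+\Ne G$ is ample by Theorem~\ref{NV}. The twist by $\OO_X(\Ne G)$ is exactly what moves $c_1$ from merely big to nef, so Miyaoka's criterion applies to the sheaf as a whole. By breaking $c_2(\FF_X)$ into $c_2(X)$ and $\Ne(-K_X)\cdot G$ you discard this structure and are forced to prove the stronger, unjustified claim that $c_2(X)$ itself is pseudo-effective.
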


Following the idea of Kawamata \cite{K}, after bounding $(-K_X)\cdot c_2(X)$ from below, we can get an upper bound for Cartier index of $K_X$, which eventually implies the desired boundedness.
\begin{thm}\label{bound of r}
Fix $0<\epsilon<1$. Let $X$ be a $3$-fold of $\epsilon$-Fano type with a Mori fibration $f:X\to Z$ such that $\dim Z>0$, then
\begin{enumerate}
\item $(-K_X)\cdot c_2(X)\geq -M_\epsilon$;
\item $r_\epsilon K_X$ is Cartier, 
\end{enumerate}
where $M_\epsilon=5\Ve+12\Ne$ and $r_\epsilon=(24+M_\epsilon)!$, $\Ne$ and $\Ve$ are the numbers defined in Theorem \ref{NV}.
\end{thm}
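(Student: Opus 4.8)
The plan is to combine the pseudo-effectivity of $c_2(\FF_X)$ from Theorem \ref{F pseff} with the explicit boundedness data in Theorem \ref{NV} to get the lower bound in (1), and then feed this lower bound into a Riemann--Roch / orbifold-Euler-characteristic argument in the style of Kawamata \cite{K} to bound the Cartier index in (2). For part (1): unwinding the definition $\FF_X = T^1_X \oplus \OO_X(\Ne G)$, pseudo-effectivity of $c_2(\FF_X)$ gives $c_2(T^1_X)\cdot D \ge -c_1(\OO_X(\Ne G))\cdot D \cdot(\text{stuff})$ against a suitable nef class, but more directly I would expand $c_2(\FF_X)$ in terms of $c_2(X)$, $c_1(X)^2$ and $\Ne G$, then intersect with the ample class $-K_X+\Ne G$ (which by the remark after Definition \ref{GF} satisfies $(-K_X+\Ne G)^3\le \Ve$). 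Pairing the pseudo-effective class $c_2(\FF_X)$ with this nef class yields $\ge 0$, and after moving the $c_1^2$- and $G$-terms (each controlled by $\Ve$ and $\Ne$ via the bound $(-3K_X+\Ne G)^3\le\Ve$ or $(-2K_X+\Ne f^*H)^3\le\Ve$, and $G$ being nef with $G^2$ effective or zero) to the other side, I expect to land exactly on an inequality of the shape $(-K_X)\cdot c_2(X)\ge -(5\Ve+12\Ne)=-M_\epsilon$. The constants $5$ and $12$ will come out of the binomial expansion of $c_2$ of a rank-two direct sum and the elementary numerical estimates for $G^2\cdot(-K_X)$, $G\cdot K_X^2$, etc.

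For part (2): here I would follow Kawamata's argument in \cite{K} bounding the Gorenstein index of a terminal Fano threefold. Since $X$ is of $\epsilon$-Fano type it has klt (indeed, on a $\bQ$-factorial terminal model, terminal) singularities, and the singularities of type cyclic quotient contribute to the orbifold Riemann--Roch formula for $\chi(\OO_X)=1$ (or $\chi(-K_X)$) a correction term $\sum_{P} c_P$ where $c_P>0$ depends on the local index $r_P$. Combining the global Riemann--Roch
\[
\chi(\OO_X)=\frac{1}{24}(-K_X)\cdot c_2(X)+\sum_P c_P
\]
with $\chi(\OO_X)=1$ and the lower bound $(-K_X)\cdot c_2(X)\ge -M_\epsilon$ from (1), one gets $\sum_P c_P\le 1+\frac{M_\epsilon}{24}$, hence a bound on the number of singular points and, since each $c_P\ge \frac{1}{24}\cdot(\text{something growing in }r_P)$, a bound $r_P\le 24+M_\epsilon$ on every local index. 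Then the global Cartier index $r$ of $K_X$ divides $\lcm(r_P)\mid (24+M_\epsilon)!=r_\epsilon$, giving (2). One subtlety: one must first pass to a small $\bQ$-factorialization or terminal model to apply the terminal Riemann--Roch, and then check the Cartier index does not increase — this is standard, as terminalization only extracts divisors and the statement is about $K_X$ on the given $X$; alternatively one argues directly on $X$ using the klt (canonical) Riemann--Roch with the $\epsilon$-klt hypothesis bounding the contributions.

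The main obstacle I anticipate is part (2), specifically making the passage from "$\sum_P c_P$ bounded" to "each local index $r_P$ bounded" completely effective: one needs the precise lower bound $c_P\ge \frac{r_P-1}{12 r_P}\ge\frac{1}{24}$ type estimate (or Kawamata's sharper $c_P\ge\frac{1}{24}$ with a count of points, plus $r_P\le$ something) together with care that $X$ may only be $\bQ$-factorial after a birational modification that does not destroy the Mori fibration bound. The first part is comparatively routine Chern-class bookkeeping once Theorem \ref{F pseff} is granted; the real content is tracking constants through Kawamata's index bound and verifying that the modifications needed to run terminal Riemann--Roch are harmless. I also expect a minor technical point in separating the two cases $\dim Z=1$ and $\dim Z=2$ in part (1), since the relevant ample divisor and the value of $G^2$ differ, but both cases should yield the same final constant $M_\epsilon$ by design.
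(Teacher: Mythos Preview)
Your plan matches the paper's proof: one expands $c_2(\FF_X)=c_2(X)-K_X\cdot N_\epsilon G$ (note no $c_1(X)^2$ term appears, since $\FF_X$ is $T^1_X$ direct-summed with a line bundle), pairs with the ample class $-K_X+N_\epsilon G$, bounds the cross term by $(-K_X+N_\epsilon G)^3\le V_\epsilon$ and the term $G\cdot c_2(X)$ by a case split on $\dim Z$ (del Pezzo vs.\ conic bundle), then feeds (1) into Reid's orbifold Riemann--Roch for $\chi(\OO_X)=1$ to get $\sum(r_i-1/r_i)\le 24+M_\epsilon$ and hence $r_i\le 24+M_\epsilon$. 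Your concern about passing to a terminal model is unnecessary: by the paper's definition of a Mori fibration, $X$ is already $\bQ$-factorial with terminal singularities, so Reid's formula applies directly to $X$.
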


This paper is organized as follows. For the reader's convenience, in Sections \ref{section proof 1} and \ref{section proof 2}, we recall the proof of Theorems \ref{thm MFS} and \ref{NV}. In Section \ref{section proof main}, we prove our main theorems, Theorems \ref{F pseff}, \ref{bound of r}, and \ref{main}.

\section{Preliminaries}
\subsection{Notation and conventions}\label{section notation}\label{notation}
 We adopt the standard notation and definitions in \cite{KMM} and \cite{KM}, and will freely use them.

A {\it pair} $(X, B)$ consists of  a normal projective variety $X$ and an effective
$\mathbb{Q}$-divisor $B$ on $X$ such that
$K_X+B$ is $\mathbb{Q}$-Cartier.   

The pair $(X, B)$ is called a
\emph{log Fano pair} if $-(K_X+B)$ is ample.

Let $f: Y\rightarrow X$ be a log
resolution of the pair $(X, B)$, write
$$
K_Y =f^*(K_X+B)+\sum a_iF_i,
$$
where $\{F_i\}$ are distinct prime divisors.  For some $\epsilon \in [0,1]$, the
pair $(X,B)$ is called
\begin{itemize}
\item[(a)] \emph{$\epsilon$-kawamata log terminal} (\emph{$\epsilon$-klt},
for short) if $a_i> -1+\epsilon$ for all $i$;

\item[(b)] \emph{$\epsilon$-log canonical} (\emph{$\epsilon$-lc}, for
short) if $a_i\geq  -1+\epsilon$ for all $i$;

\item[(c)] \emph{terminal} if  $a_i> 0$ for all $f$-exceptional divisors $F_i$ and all $f$. 
\end{itemize}
Usually we write $X$ instead of $(X,0)$ in the case $B=0$.
Note that $0$-klt (resp. $0$-lc) is just klt (resp. lc) in the usual sense.

  $F_i$ is called a {\it non-klt place}  (resp. {\it non-lc place}) of $(X, B)$  if $a_i\leq -1$ (resp. $<-1$).
A subvariety $V\subset X$ is called a {\it non-klt center} (resp. {\it non-lc center}) of $(X, B)$ if it is the image of a non-klt place (resp. non-lc place). The {\it non-klt locus} $\text{Nklt}(X, B)$ is the union of all  non-klt centers of $(X, B)$. The {\it non-lc locus} $\text{Nlc}(X, B)$ is the union of all  non-lc centers of $(X, B)$.

A normal projective variety $X$ is \emph{of $\epsilon$-Fano type} if there exists an effective $\bQ$-divisor $B$ such that $(X, B)$ is an $\epsilon$-klt log Fano pair. $0$-Fano type is also called Fano type for simplicity.

A projective morphism $f:X\to Z$ between normal projective varieties is called a {\it Mori fibration} (or {\it Mori fiber space}) if
\begin{enumerate}
\item $X$ is $\bQ$-factorial with terminal singularities; 

\item $f$ is a {\it contraction}, i.e., $f_*\OO_X=\OO_Z$;

\item $-K_X$ is ample over $Z$;
\item $\rho(X/Z)=1$;
\item $\dim X > \dim Z$. 
\end{enumerate}
We say that $X$ is with a {\it Mori fibration structure} if there exists a Mori fibration $X\to Z$. In particular, in this situation, $X$ has at most $\bQ$-factorial terminal singularities by definition.

A collection of varieties $\{X_t\}_{t\in T}$ is
said to be \emph{bounded} (resp.  \emph{birationally bounded}) if there exists $h:\mathcal{X}\rightarrow
S$ a projective morphism between schemes of finite type such that 
each $X_t$ is isomorphic (resp. birational) to $\mathcal{X}_s$ for some $s\in S$.

\subsection{Volumes}

Let $X$ be an $n$-dimensional projective variety  and $D$ be a Cartier divisor on $X$. The {\it volume} of $D$ is the real number
$$
{\Vol}(X, D)=\limsup_{m\rightarrow \infty}\frac{h^0(X,\OO_X(mD))}{m^n/n!}.
$$
Note that the limsup is actually a limit. Moreover by the homogenous property of  volumes, we can extend the definition to $\bQ$-Cartier $\bQ$-divisors. Note that if $D$ is a nef $\bQ$-divisor, then $\Vol(X, D)=D^n$.

For more background on volumes, see \cite[11.4.A]{Positivity2}. It is easy to see the following inequality for volumes by comparing global sections by exact sequences.
\begin{lem}[{\cite[Lemma 2.5]{Jiang6}}]\label{lemma volume}
Let $X$ be a projective normal variety, $D$ a $\bQ$-Cartier $\bQ$-divisor and $S$ a semi-ample normal Cartier prime divisor. Then for any rational number $q>0$,
$$
\Vol(X,D+qS)\leq \Vol(X, D)+q(\dim X) \Vol(S, D|_S+qS|_S).
$$
\end{lem}

\subsection{Log canonical thresholds and $\alpha$-invariants}
Let $(X, B)$ be  an lc pair and $D\geq 0$ be a $\bQ$-Cartier $\bQ$-divisor. The
{\it log canonical threshold} of $D$ with respect to $(X, B)$ is
$$\lct(X, B; D) = \sup\{t\in \bQ \mid (X, B+ tD) \text{ is lc}\}.$$
For application, we need to consider the case when $D$ is not effective. 
Let $G$ be a $\bQ$-Cartier $\bQ$-divisor satisfying $G+B\geq 0$, The
{\it generalized log canonical threshold} of $G$ with respect to $(X, B)$ is
$$\glct(X, B; G) = \sup\{t\in [0,1] \cap \bQ \mid (X, B+ tG) \text{ is lc}\}.$$
Note that the assumption $t\in [0,1]$ guarantees that $B+ tG\geq 0$.

If  $(X, B)$ is an lc log Fano pair,  the {\it  (generalized)  $\alpha$-invariant} of $(X, B)$ is defined by 
$$
\alpha(X, B)=\inf\{\glct(X,B;G)\mid G\sim_\bQ-(K_X+B), G+B\geq 0\}.
$$

\subsection{Non-klt centers}

The following lemma suggests a standard way to construct non-klt centers.
\begin{lem}[{cf. \cite[Lemma 2.29]{KM}}]\label{dimk}
Let $(X, B)$ be a pair and $V\subset X$ a closed subvariety of codimesion $k$ such that $V$ is not contained in the singular locus of $X$. If $\mult_V B\geq k$, then $V$ is a non-klt center of $(X, B)$. 
\end{lem}
Recall that the {\it multiplicity} $\mult_VF$ of a divisor $F$ along a subvariety $V$ is defined by the multiplicity $\mult_xF$ of $F$ at a general point $x\in V$.

Unfortunately, the converse of Lemma \ref{dimk} is not true unless $k=1$. Usually we do not have good estimates for the multiplicity along a non-klt center but the following lemma.

\begin{lem}[{cf. \cite[Theorem 9.5.13]{Positivity2}}]
Let $(X, B)$ be a pair and $V\subset X$  a non-klt center of $(X, B)$ such that $V$ is not contained in the singular locus of $X$. Then $\mult_V B\geq 1$. 
\end{lem}

We have the following connectedness lemma of Koll\'{a}r and Shokurov for non-klt locus (cf.  
Shokurov \cite{Shokurov}, Koll\'{a}r \cite[17.4]{Kol92}).
\begin{thm}[Connectedness Lemma]
Let $f:X\rightarrow Z$ be a proper morphism of normal varieties with connected fibers and $D$ a $\bQ$-divisor such that $-(K_X+D)$ is $\bQ$-Cartier, $f$-nef, and $f$-big. Write $D=D^+-D^-$ where $D^+$ and $D^-$ are effective with no common components. If $D^-$ is $f$-exceptional (i.e. all of its components have image of codimension at least $2$), then ${\rm Nklt} (X,D)\cap f^{-1}(z)$ is connected for any $z\in Z$. 
\end{thm}

As an application, we have the following theorem on inversion of adjunction (cf. \cite[Theorem 5.50]{KM}). Here we only use a weak version.
\begin{thm}[Inversion of adjunction]
Let $(X, B)$ be a pair and $S\subset X$ a normal  Cartier divisor not contained in the support of $B$. Then $$\Nklt(X,B)\cap S\subset \Nklt(S, B|_S).$$ In particular, if $\Nklt(X, B)\cap S\neq \emptyset$, then $(S, B|_S)$ is not klt.
\end{thm}

\subsection{Length of extremal rays}
Recall the result on length of extremal rays due to Kawamata.
\begin{thm}[{\cite{Ka}}]\label{ext ray}
Let $(X, B)$ be a klt pair. Then every $(K_X+B)$-negative extremal ray $R$ is generated by a rational curve $C$ such that $$0<-(K_X+B)\cdot C\leq 2\dim X.$$
\end{thm}
However, we need to deal with non-klt pairs in application. We have a slightly generalization of this theorem for non-klt pairs.

\begin{thm}[{\cite[Theorem 1.1(5)]{Fujino}}]\label{ext ray2}
Let $(X, B)$ be a pair. Fix a $(K_X+B)$-negative extremal ray $R$. Assume that 
$$
R\cap \overline{NE}(X)_{\Nlc(X,B)} = \{0\},
$$
where 
$$
 \overline{NE}(X)_{\Nlc(X,B)} = {\rm Im}(\overline{NE}(\Nlc(X,B)) \to \overline{NE}(X)).
$$
Then $R$ is generated by a rational curve $C$ such that $$0<-(K_X+B)\cdot C\leq 2\dim X.$$
\end{thm}

\section{Proof of Theorem \ref{thm MFS}}\label{section proof 1}

In this section, for the reader's convenience, we recall the proof of Theorem \ref{thm MFS} from \cite{Jiang}. We start with two lemmas. The first lemma is about equivalent definitions of $\epsilon$-Fano type.

\begin{lem}[{cf. \cite[Lemma-Definition 2.6]{PS}}]\label{FT=CY}
Let $Y$ be a projective  normal variety, and $\epsilon\in [0,1)$. The following are equivalent:
\begin{enumerate}
\item $Y$ is of $\epsilon$-Fano type;

\item There exists an effective $\mathbb{Q}$-divisor  $\Delta$ such that $\Delta$ is big, $(Y, \Delta)$ is $\epsilon$-klt, and $K_Y+\Delta\equiv 0$.
\end{enumerate}
\end{lem}
\begin{proof}
First we assume that  $Y$ is of $\epsilon$-Fano type, that is, there exists an effective $\mathbb{Q}$-divisor $B$ on $Y$ such that $(Y, B)$ is $\epsilon$-klt log Fano pair. Then take a general effective ample $\mathbb{Q}$-divisor $A$ on $Y$ such that $(Y, B+A)$ is $\epsilon$-klt and 
$$
K_Y+B+A\sim_{\mathbb{Q}}0.
$$
We may take $\Delta=A+B$.

Then we assume that there exists an effective $\mathbb{Q}$-divisor  $\Delta$ such that $\Delta$ is big, $(Y, \Delta)$ is $\epsilon$-klt, and $K_Y+\Delta\equiv 0$. Since $\Delta$ is big, we may write $\Delta=A+G$ where $A$ is an ample $\mathbb{Q}$-divisor and $G$ is an effective $\mathbb{Q}$-divisor. We may take a sufficiently small $\delta>0$ such that $(Y, \Delta+\delta G)$ is again $\epsilon$-klt. Hence $(Y, (1-\delta)\Delta+\delta G)$ is $\epsilon$-klt, and 
$$-(K_Y+ (1-\delta)\Delta+\delta G)\equiv \delta A$$
is ample. Hence $Y$ is of $\epsilon$-Fano type.
\end{proof}

Being of $\epsilon$-Fano type is preserved by MMP according to the following lemma.
\begin{lem}[{cf. \cite[Lemma 3.1]{GOST}}]\label{mmp lem}
Let $Y$ be a projective  normal variety and $f: Y\rightarrow Z$ be a projective birational contraction.  
\begin{enumerate}
\item If $Y$ is of $\epsilon$-Fano type, so is $Z$;

\item Assume that $f$ is small, then $Y$ is of $\epsilon$-Fano type if and only if so is $Z$.
\end{enumerate}
In particular, minimal model program preserves $\epsilon$-Fano type.
\end{lem}

\begin{proof}
First we assume that  $Y$ is of $\epsilon$-Fano type, that is, by Lemma \ref{FT=CY}, there exists an effective $\mathbb{Q}$-divisor  $\Delta$ such that $\Delta$ is big, $(Y, \Delta)$ is $\epsilon$-klt, and $K_Y+\Delta\equiv 0$. 
Pushing forward by $f$, by negativity lemma, 
$$
K_Y+\Delta=f^*(K_Z+f_*\Delta)\equiv 0.
$$
Hence $f_*\Delta$ is big, $(Z,f_*\Delta)$ is $\epsilon$-klt, and $K_Z+f_*\Delta\equiv 0$, that is, $Z$ is of $\epsilon$-Fano type.

Next we assume that  $f$ is small and  $Z$ is of $\epsilon$-Fano type. Let $\Gamma$ be an effective big $\mathbb{Q}$-divisor on $Z$ such that $(Z, \Gamma)$ is $\epsilon$-klt and $K_Z+\Gamma\equiv 0$. Let $\Delta$ be the strict transform of $\Gamma$ on $Y$. Then $\Delta$ is big since $f$ is small. Again by $f$ is small, 
$$
K_Y+\Delta=f^*(K_Z+\Gamma).
$$
Hence $(Y, \Delta)$ is $\epsilon$-klt and $K_Y+\Delta\equiv 0$. Hence $Y$ is of $\epsilon$-Fano type. 
\end{proof}
\begin{proof}[Proof of Theorem \ref{thm MFS}]
Fix $0<\epsilon<1$, an integer $n>0$. Let  $X$ be a variety of $\epsilon$-Fano type of dimension $n$, that is, by Lemma \ref{FT=CY}, there exists an effective $\mathbb{Q}$-divisor  $\Delta$ such that $\Delta$ is big, $(X, \Delta)$ is $\epsilon$-klt, and $K_X+\Delta\equiv 0$. 
By \cite[Corollary 1.4.3]{BCHM},  taking $\bQ$-factorialization of $(X, \Delta)$, we have a birational morphism $\phi: X_0 \rightarrow X$ where
$K_{X_0}+\phi^{-1}_*\Delta=\phi^*(K_X+\Delta)$, $X_0$ is $\bQ$-factorial, and $\phi$ is isomorphic in codimension one. 

Again by \cite[Corollary 1.4.3]{BCHM},  taking terminalization of $X_0$, 
we have a birational morphism $\pi: X_1 \rightarrow X_0$ where
$K_{X_1}+\Delta_{X_1}=\pi^*(K_{X_0}+\phi^{-1}_*\Delta)$, 
$\Delta_{X_1}\geq \pi^*(\phi^{-1}_*\Delta)$ is an effective $\bQ$-divisor, $X_1$ is $\bQ$-factorial terminal.
Here $K_{X_1}+\Delta_{X_1}\equiv 0$ and $(X_1,\Delta_{X_1})$ is  $\epsilon$-klt. Since $\Delta$ is big and $\phi$ is small, $\Delta_{X_1}\geq \pi^*(\phi^{-1}_*\Delta)$ is big. Therefore, $X_1$ is $\bQ$-factorial terminal and of $\epsilon$-Fano type.

Running $K$-MMP on $X_1$, we get a sequence of normal projective varieties: 
$$
X_1\dashrightarrow X_2 \dashrightarrow X_3\dashrightarrow \cdots\dashrightarrow X_r\rightarrow T. 
$$ 
Since $-K_{X_1}$ is big, this sequence ends up with a Mori fiber space $X_r\rightarrow T$ (cf. \cite[Corollary 1.3.3]{BCHM}). Since we run $K$-MMP, $X_r$ is again $\bQ$-factorial terminal. 
By Lemma \ref{mmp lem}, for all $i$, $X_i$ is of $\epsilon$-Fano type. Now $X_r$ is an $n$-dimensional variety of  $\epsilon$-Fano type with a Mori fiber structure by construction, which is birational to $X$. 
We complete the proof of Theorem \ref{thm MFS}. 
\end{proof}

\section{Proof of Theorem \ref{NV}}\label{section proof 2}
In this section, for the reader's convenience, we recall the proof of Theorem \ref{NV} from \cite{Jiang6}. The proof of Theorem \ref{NV} follows from Lemmas \ref{lemma ample} and \ref{lem volume} below.
\subsection{Setting}\label{setting}Fix $0<\epsilon<1$. Let $X$ be a $3$-fold of $\epsilon$-Fano type with a Mori fibration $f:X\to Z$ and $\dim Z>0$. Suppose $(X, B)$  is an  $\epsilon$-klt log Fano pair.  We will explain more about the surface $G$ defined in Definition \ref{GF}.

If $\dim Z=1$, then $Z=\bP^1$. In this case $G$ is defined to be a general fiber of $f$, which is a smooth del Pezzo surface.

If $\dim Z=2$, then we want to explain the choice of $H$ first. We first claim that such $Z$ forms a bounded family. Since $X$ is of $\epsilon$-Fano type, there exist effective $\bQ$-divisors $\Delta$ and $\Delta'$ such that $(Z, \Delta)$ is klt, $-(K_Z+\Delta)$ is ample by \cite[Corollary 3.3]{FG}, and $(Z, \Delta')$ is $\delta$-klt, $-(K_Z+\Delta')\sim_\bQ 0$ by \cite[Corollary 1.7]{Birkar}. Note that $\delta$ is a positive number depends only on $\epsilon$. We may choose sufficiently small $t>0$ such that $(Z, (1-t)\Delta'+t\Delta)$ is still $\delta$-klt. In this case, 
$$
-(K_Z+(1-t)\Delta'+t\Delta)\sim -t(K_Z+\Delta)
$$
is ample. Hence $Z$ is of $\delta$-Fano type. Hence by BAB Conjecture in dimension $2$, such $Z$ forms a bounded family. This means that there is a positive integer $d_\epsilon$ depending only on $\epsilon$, and we may find a general very ample divisor $H$ on $Z$ such that $H^2\leq d_\epsilon$. Now we take $G=f^*H$, which is a conic bundle over the curve $H$ (i.e. $-K_G$ is ample over $H$). Note that $H$ and $G$ are smooth since $H$ is general. Also $(G, B|_G)$ is $\epsilon$-klt and $-(K_G+B|_G)+G|_G$ is ample by adjunction. Note that $G|_G=f^*(H|_H)$ is the sum of $(H^2)$ fibers of $f$ and $(H^2)\leq d_\epsilon$. Hence $-(K_G+B|_G)+d_\epsilon F$ is ample, where $F$ is a general fiber of $f$.

\subsection{Two boundedness theorems on surfaces}We recall two boundedness theorems on surfaces, the idea of proofs of them are from the proof of BAB Conjecture in dimension two by Alexeev--Mori \cite{AM}. 
\begin{thm}[{\cite[Theorem 2.8]{Jiang}}]\label{gac}
Fix $0<\epsilon<1$. 
Then there exists a number $\mu(2,\epsilon)>0$ depending only on $\epsilon$ with the following property:

 If $(X, B)$  is an  $\epsilon$-klt log Fano pair and $X$ is a smooth surface, then
$\alpha(X,B)\geq \mu(2,\epsilon).$
\end{thm}

\begin{thm}[{\cite[Theorem 1.7]{Jiang6}}]\label{conj LCT}
Fix $0<\epsilon<1$. Then there exists a number  $\lambda(2,\epsilon)>0$ depending only on $\epsilon$, satisfying the following property:
\begin{enumerate}
\item If $(G, B)$  is an  $\epsilon$-klt log Fano pair and $G$ is a smooth del Pezzo surface, then $(G, (1+t)B)$ is klt for $0<t\leq \lambda(2,\epsilon)$;

\item
If $f:G\to H$ is a conic bundle from a smooth surface $G$ to a smooth curve, $(G, B)$ is an $\epsilon$-klt pair and $-(K_G+B)+d_\epsilon F$ is ample, then $(X, (1+t)B)$ is klt for $0<t\leq \lambda(2,\epsilon)$. Here $F$ is a general fiber of $f$ and $d_\epsilon$ is the number depending only on $\epsilon$ defined in Subsection \ref{setting}.
\end{enumerate}
\end{thm}
Note that in \cite{Jiang6}, such a conic bundle $G$ in (2) is called a $(2,1, d_\epsilon,\epsilon)$-Fano fibration.
\subsection{Boundedness of ampleness}
\begin{lem}[{cf. \cite[Lemma 3.2]{Jiang6}}]\label{lemma ample}
Keep the setting in Subsection \ref{setting},  then there exists a positive integer $N_\epsilon$ depending only on $\epsilon$ such that $-K_X+kG$ is ample for all $k\geq N_\epsilon$.
\end{lem}
\begin{proof} 
Let $X$ be a $3$-fold of $\epsilon$-Fano type with a Mori fibration $f:X\to Z$ and $\dim Z>0$. Suppose $(X, B)$  is an  $\epsilon$-klt log Fano pair. 

By our construction, in either case, $(G, B|_G)$ satisfies one of the two conditions in Theorem \ref{conj LCT}. Hence $(G, (1+\lambda)B|_G)$ is klt for $\lambda=\lambda(2,\epsilon)$.

Hence, in either case, every curve in $\Nklt(X, (1+\lambda)B)$ is contracted by $f$ by inversion of adjunction. In particular, every curve $C_0$ supported in $\Nklt(X, (1+\lambda)B)$ satisfies that $G\cdot C_0 =0$.

Now we consider an extremal ray $R$ of $\overline{NE}(X)$. Since $X$ is of Fano type, $R$ is always generated by a rational curve by Cone Theorem.

If $R$ is $(K_X+(1+\lambda)B)$-non-negative, recall that $-(K_X+B)$ is ample, then 
$$
-K_X\cdot R= -\Big(1+\frac{1}{\lambda}\Big)(K_X+B)\cdot R+\frac{1}{\lambda}(K_X+(1+\lambda)B))\cdot R> 0.
$$

If $R$ is $(K_X+(1+\lambda)B)$-negative and $G\cdot R=0$, then $-K_X\cdot R>0$ since $-K_X$ is ample over $Z$ and $R$ is contracted by $f$.

If $R$ is $(K_X+(1+\lambda)B)$-negative and $G\cdot R>0$, then every curve generating $R$ is not supported in $\Nklt(X, (1+\lambda)B)$. By Theorem \ref{ext ray2}, $R$ is generated by a rational curve $C$ such that
$$
(K_X+(1+\lambda)B)\cdot C \geq -6.
$$
On the other hand, $G\cdot C\geq 1$ since $G\cdot C>0$ and $G$ is Cartier.
Hence 
\begin{align*}
{}&\Big(-K_X+\frac{6}{\lambda}G\Big)\cdot C\\
={}& -\Big(1+\frac{1}{\lambda}\Big)(K_X+B)\cdot C+\frac{1}{\lambda}(K_X+(1+\lambda)B)\cdot C+\frac{6}{\lambda}G\cdot C> 0.
\end{align*}

In summary, 
$$
(-K_X+kG)\cdot R> 0
$$
holds for every extremal ray $R$ and for all $k\geq \frac{6}{\lambda}$ (recall that $G$ is nef). By Kleiman's Ampleness Criterion, 
$-K_X+kG$ is ample for all $k\geq \frac{6}{\lambda}$. We may take 
$$
N_\epsilon=\frac{6}{\lambda(2,\epsilon)}
$$
and complete the proof.
\end{proof}
\subsection{Boundedness of volumes}

\begin{lem}[{cf. \cite[Theorem 4.1]{Jiang6}}]\label{lem volume}Keep the setting in Subsection \ref{setting},  then there exists a positive number $V_\epsilon$ depending only on $\epsilon$  such that $(-K_X+N_\epsilon G)^3\leq V_\epsilon$.
\end{lem}

\begin{proof}
Let $X$ be a $3$-fold of $\epsilon$-Fano type with a Mori fibration $f:X\to Z$ and $\dim Z>0$. Suppose $(X, B)$  is an  $\epsilon$-klt log Fano pair. 

If $\dim Z=1$,  $G$ is a smooth del Pezzo surface and $(G, B|_G)$ is an  $\epsilon$-klt log Fano pair. Note that $\Vol(G, -K_G)=K_G^2\leq 9$.
Assume that for some $w>0$,
$$(-K_X+N_\epsilon G)^3>27(N_\epsilon+w).$$
It suffices to find an upper bound for $w$.
We may assume that $w>2$. 
By Lemma \ref{lemma volume},
\begin{align*}
{}&\Vol(X, -K_X-wG)\\
\geq{}& \Vol(X, -K_X+N_\epsilon G)-3(N_\epsilon+w)\Vol(G, -K_G)>0.
\end{align*}
Hence there exists an effective $\bQ$-divisor $B'\sim_\bQ-K_X-wG$.
For two general fibers $G_1$ and $G_2$, consider the pair $(X, (1-\frac{2}{w})B+\frac{2}{w}B'+G_1+G_2)$ where $2/w<1$. Note that 
$$-\Big(K_{X}+ \Big(1-\frac{2}{w}\Big)B+\frac{2}{w}B'+G_1+G_2\Big)\sim_\bQ-\Big(1-\frac{2}{w}\Big)(K_X+B)$$
is ample.
By Connectedness Lemma,  
$\Nklt(X, (1-\frac{2}{w})B+\frac{2}{w}B'+G_1+G_2)$ is connected. On the other hand, it contains $G_1\cup G_2$, hence contains a non-klt center dominating $Z$. By inversion of adjunction, 
$(G, (1-\frac{2}{w})B|_G+\frac{2}{w}B'|_G)$ is not klt for a general fiber $G$. On the other hand, $(G, B|_G)$ is an $\epsilon$-klt log Fano pair of dimension $2$, $G$ is a  del Pezzo surface, and $B'|_G-B|_G\sim_\bQ-(K_G+B|_G)$.
Hence by Theorem \ref{gac}, $$\frac{2}{w}\geq \glct(G, B|_G; B'|_G-B|_G) \geq \mu(2, \epsilon).$$ Hence $w\leq \frac{2}{\mu(2,\epsilon)}$. In this case, we may take 
$$
V_\epsilon=27\Big(N_\epsilon + \frac{2}{\mu(2,\epsilon)}\Big).
$$

Now assume that $\dim Z=2$. 
As constructed in Subsection \ref{setting}, $G\to H$ is a conic bundle from a smooth surface to a smooth curve. 

\begin{claim}\label{claim}
$\Vol(G, -K_X|_{G}+N_\epsilon G|_G)\leq 8+4(N_\epsilon+1)d_\epsilon
$.
\end{claim}
\begin{proof}[Proof of Claim \ref{claim}]
Note that $-K_X+N_\epsilon G$ is ample, so is $-K_X|_{G}+N_\epsilon G|_G$. Also note that $(H^2)\leq d_\epsilon$ and $G|_G= (H^2)F$ where $F\simeq \bP^1$ is a general fiber of $f$. Hence
\begin{align*}
\Vol(G, -K_X|_{G}+N_\epsilon G|_G){}&=(-K_X|_{G}+N_\epsilon G|_G)^2\\
{}&=(-K_{G}+(N_\epsilon+1) G|_G)^2\\
{}&=(K_G)^2-2(N_\epsilon+1)K_G\cdot (H^2)F\\
{}&\leq 8+4(N_\epsilon+1)d_\epsilon
\end{align*}
Here we use the fact that for the conic bundle $G$, $(K_G)^2\leq 8$.
\end{proof}
Assume that for some $w>0$,
$$(-K_X+N_\epsilon G)^3>3(N_\epsilon+w)(8+4(N_\epsilon+1)d_\epsilon
).$$
It suffices to find an upper bound for $w$.
We may assume that $w>3$.
By Lemma \ref{lemma volume} and Claim \ref{claim},
\begin{align*}
{}&\Vol(X, -K_X-wG)\\
\geq {}& \Vol(X, -K_X+N_\epsilon G)-3(N_\epsilon+w)\Vol(G, -K_X|_{G}+N_\epsilon G|_G)>0.
\end{align*}
Hence there exists an effective $\bQ$-divisor $B'\sim_\bQ-K_X- wG$.
For a general fiber $F$ of $X$ over $z\in Z$, there exists a number $\eta>0$ (cf. \cite[4.8]{SOP}) such that for any general $H'\in |H|$ containing $z$, 
$$\Nklt\Big(X, \Big(1-\frac{3}{w}\Big)B+\frac{3}{w}B'\Big)=\Nklt\Big(X, \Big(1-\frac{3}{w}\Big)B+\frac{3}{w}B'+\eta f^*(H')\Big).$$
We may take general $H_j\in |H|$ containing $z$ for $1\leq j\leq J$ with $J>\frac{2}{\eta}$  and take $G_1=\sum_{j=1}^J\frac{2}{J}f^*(H_j)$. Then $\mult_{F}G_1\geq 2$ and $G_1\sim_\bQ 2f^*(H)\sim_\bQ 2G$. In particular, $(X, G_1)$ is not klt at $F$ and by construction, in a neighborhood of $F$,
\begin{align*}
{}&\Nklt\Big(X, \Big(1-\frac{3}{w}\Big)B+\frac{3}{w}B'\Big)\cup F\\
={}&\Nklt\Big(X, \Big(1-\frac{3}{w}\Big)B+\frac{3}{w}B'+G_1\Big).
\end{align*}
Take a general element $G_2\in |f^*(H)|$ not containing $F$, consider the pair $(X, (1-\frac{3}{w})B+\frac{3}{w}B'+G_1+G_2)$ where $3/w<1$. 
Then
$$-\Big(K_{X}+ \Big(1-\frac{3}{w}\Big)B+\frac{3}{w}B'+G_1+G_2\Big)\sim_\bQ-\Big(1-\frac{3}{w}\Big)(K_X+B)$$
is ample.
Since 
$$F\cup G_2\subset \Nklt\Big(X, \Big(1-\frac{3}{w}\Big)B+\frac{3}{w}B'+G_1+G_2\Big),$$
 by Connectedness Lemma,   there is a curve $C$ contained in $\Nklt(X, (1-\frac{3}{w})B+\frac{3}{w}B'+G_1+G_2)$, intersecting $F$ and not contracted by $f$. Hence $C$ is  contained in $\Nklt(X, (1-\frac{3}{w})B+\frac{3}{w}B')$ by the construction of $G_1$ and generality of $G_2$. Since $C$ intersects $F$, so does $\Nklt(X, (1-\frac{3}{w})B+\frac{3}{w}B')$. 
 By inversion of adjunction, 
$(F, (1-\frac{3}{w})B|_F+\frac{3}{w}B'|_F)$ is not klt for a general fiber $F$. On the other hand, $(F, B|_F)$ is $\epsilon$-klt and $F\simeq \bP^1$. Hence  $\frac{3}{w}\geq \epsilon$ by comparing the coefficients of $\frac{3}{w}B'|_F$.
Hence $w\leq \frac{3}{\epsilon}$. Hence we may take
$$V_\epsilon=3\Big(N_\epsilon+\frac{3}{\epsilon}\Big)(8+4(N_\epsilon+1)d_\epsilon
)$$
by definition of $w$.
\end{proof}

\section{Proof of main theorems}\label{section proof main}

\subsection{Pseudo-effectivity of $c_2(\FF_X)$}
We recall a criterion of pseudo-effectivity of second Chern classes due to Miyaoka \cite{Miy87}.
\begin{definition}[cf. {\cite[Section 6]{Miy87}}] Let $X$ be an $n$-dimensional normal projective variety. A torsion free sheaf $\EE$ is called {\it generically semi-positive} (or  {\it generically nef}) if  one of the following equivalent conditions holds:
\begin{enumerate}
\item For every quotient torsion free sheaf $\EE\to \mathcal{L}$ and any ample divisors $H_i$, 
$$
c_1(\mathcal{L})\cdot H_1\cdot H_2\cdot \cdots \cdot H_{n-1}\geq 0.
$$
\item $\EE|_C$ is nef for a general
curve $C = D_1\cap \ldots \cap D_{n-1}$ for general $D_i \in |m_iH_i|$ and $m_i \gg 0$ and any ample divisors $H_i$.
\end{enumerate}
\end{definition}
The equivalence of these two definitions follows from the Mehta--Ramanathan theorem \cite{MR} (cf. \cite[Theorem 2.5]{Miy87}).
\begin{thm}[{\cite[Theorem 6.1]{Miy87}}]\label{c2 pseff}
Let $X$ be a normal projective variety which is smooth in codimension $2$. Let $\EE$ be a torsion free sheaf on $X$ such that 
\begin{enumerate}
\item $c_1(\EE)$ is a nef $\bQ$-Cartier divisor, and
\item $\EE$ is generically semi-positive.
\end{enumerate}
Then $c_2(\EE)$ is pseudo-effective.
\end{thm}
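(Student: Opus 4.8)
The plan is to reconstruct Miyaoka's argument by cutting down to a surface and then invoking the Bogomolov inequality. Recall that the relevant positivity of a codimension-two class is tested against products of nef divisors, so it suffices to show $c_2(\EE)\cdot H_1\cdots H_{n-2}\geq 0$ for arbitrary ample divisors $H_1,\dots,H_{n-2}$ on the $n$-dimensional variety $X$. First I would fix such $H_i$, replace them by large multiples $m_iH_i$, and take general members $D_i\in|m_iH_i|$; by Bertini and, crucially, the hypothesis that $X$ is smooth in codimension $2$, the complete intersection $S=D_1\cap\cdots\cap D_{n-2}$ is a smooth projective \emph{surface} lying entirely in the smooth locus of $X$. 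Functoriality of Chern classes for the torsion free sheaf $\EE$ along this generically transverse restriction gives $c_2(\EE)\cdot H_1\cdots H_{n-2}=\tfrac{1}{\prod m_i}\int_S c_2(\EE|_S)$, so the whole theorem reduces to proving $\int_S c_2(\EE|_S)\geq 0$ on a smooth surface.

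On $S$ I would first pass to the locally free model: since $S$ is a smooth surface, the double dual $E:=(\EE|_S)^{**}$ is a vector bundle, and from $0\to \EE|_S\to E\to Q\to 0$ with $Q$ supported on points one gets $\int_S c_2(\EE|_S)=\int_S c_2(E)+\operatorname{length}(Q)$, so it is enough to treat the bundle $E$. Two facts survive the restriction. The first Chern class $c_1(E)=c_1(\EE)|_S$ is a restriction of a nef $\bQ$-Cartier divisor, hence nef on $S$, so in particular $c_1(E)^2\geq 0$. And generic semi-positivity passes down: a general complete intersection curve $C\subset S$ is again a general complete intersection curve in $X$, so $\EE|_C=E|_C$ is nef, which for a bundle on a curve means every torsion free quotient has non-negative degree; via the Mehta--Ramanathan restriction theorem this forces $\mu_{\min}^{H}(E)\geq 0$ with respect to $H=H_1|_S$, i.e.\ every Harder--Narasimhan slope of $E$ is non-negative.

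The engine is then the Bogomolov inequality: any $H$-semistable sheaf $G$ of rank $\rk G=r$ on $S$ satisfies $2r\,c_2(G)\geq (r-1)c_1(G)^2$, a statement proved by reduction to characteristic $p$ and Frobenius destabilization (this is exactly the circle of ideas of Miyaoka \cite{Miy87}). Applying it to the Harder--Narasimhan graded pieces $G_1,\dots,G_k$ of $E$, whose slopes satisfy $\mu_1>\cdots>\mu_k\geq 0$, and expanding
$$
c_2(E)=\sum_i c_2(G_i)+\sum_{i<j}c_1(G_i)\cdot c_1(G_j),
$$
I would combine the per-piece Bogomolov bounds $c_2(G_i)\geq \tfrac{\rk G_i-1}{2\,\rk G_i}\,c_1(G_i)^2$ with the Hodge index theorem and the nefness of $c_1(E)$ to conclude $\int_S c_2(E)\geq 0$, finishing the reduction.

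The main obstacle is precisely this last combination. The individual graded pieces $G_i$ need \emph{not} have nef first Chern class, so the self-intersections $c_1(G_i)^2$ and the cross terms $c_1(G_i)\cdot c_1(G_j)$ are indefinite and can be negative; only their total is controlled. Extracting a non-negative sum requires careful Hodge-index bookkeeping that genuinely exploits both the non-negativity and the strict decrease of the slopes $\mu_i$ along the filtration together with the nefness of the total class $c_1(E)$ (in the semistable case the argument collapses to the one line $2r\,c_2\geq (r-1)c_1^2\geq 0$, and the difficulty is entirely in the unstable case). Supporting this, the two imported results—the Bogomolov inequality and the Mehta--Ramanathan restriction theorem used to convert curve-wise nefness into surface semistability data—are the substantial technical inputs on which the whole proof rests.
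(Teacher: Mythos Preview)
The paper does not give its own proof of this statement: it is quoted verbatim as \cite[Theorem~6.1]{Miy87} and used as a black box. So there is nothing in the paper to compare your argument against; what you have written is a reconstruction of Miyaoka's original proof, and structurally it is the right one (restrict to a general complete-intersection surface inside the smooth locus, pass to the reflexive hull, use non-negativity of all Harder--Narasimhan slopes, apply Bogomolov to each graded piece, and assemble via Hodge index).

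Two comments on the reconstruction itself. First, the appeal to Mehta--Ramanathan is misplaced. You want the implication ``$E|_C$ nef for general $C$ $\Rightarrow$ $\mu_{\min}^H(E)\geq 0$ on $S$'', and this is elementary: if $E\to Q$ is a torsion-free quotient with $c_1(Q)\cdot H<0$, then its restriction to a general $C\in|mH|$ is a quotient of negative degree, contradicting nefness of $E|_C$. Mehta--Ramanathan runs in the \emph{opposite} direction (semistability on $S$ descends to general $C$) and is not needed here.

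Second, the gap you yourself flag is genuine and is the entire substance of Miyaoka's theorem. Writing $c_2(E)=\sum_i c_2(G_i)+\sum_{i<j}c_1(G_i)\cdot c_1(G_j)$ and bounding each $c_2(G_i)$ by Bogomolov leaves you with an indefinite quadratic expression in the $c_1(G_i)$; showing it is non-negative requires decomposing each $c_1(G_i)$ into its $H$-component and $H$-orthogonal part, applying Hodge index to the orthogonal parts, and then checking that the remaining expression in the slopes $\mu_i$ and ranks $r_i$ is non-negative under the constraints $\mu_1>\cdots>\mu_k\geq 0$. Until that computation is carried out, what you have is an outline rather than a proof.
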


To check the generic semi-positivity of $\FF_X$, it suffices to check that of $\mT_X$, which is proved by the following theorem.
\begin{thm}[{\cite[Proof of 1.2 (1)]{KMMT}}]\label{T nef}
Let $(X, B)$ be a $\bQ$-factorial klt log Fano pair such that $X$ is smooth in codimension $2$. Then $\mT_X$ is generically semi-positive.
\end{thm}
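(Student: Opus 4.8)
The plan is to argue by contradiction, exploiting the fact that a klt log Fano pair is rationally connected. First I would recall that since $(X,B)$ is klt log Fano, $-(K_X+B)$ is ample with $B\geq 0$, so $X$ is of Fano type and hence rationally connected (by the theorem of Zhang on log $\bQ$-Fano varieties); in particular $X$ admits no dominant rational map onto a positive-dimensional variety of nonnegative Kodaira dimension. Because $X$ is smooth in codimension $2$, the reflexive tangent sheaf $T_X^1=\mathcal{H}om(\Omega_X^1,\OO_X)$ restricts to an honest vector bundle along a general complete intersection curve $C=D_1\cap\cdots\cap D_{n-1}$ with $D_i\in|m_iH_i|$, $H_i$ ample, $m_i\gg 0$ (for a threefold the singular locus is a finite set of points, which a general such $C$ avoids). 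By the definition of generic semi-positivity it then suffices to show that $T_X^1|_C$ is nef for such general $C$.

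Suppose it is not. Then for suitable ample classes $H_1,\dots,H_{n-1}$ there is a torsion-free quotient of $T_X^1$ whose first Chern class has negative product with the movable curve class $\gamma=H_1\cdots H_{n-1}$, equivalently $T_X^1|_C$ carries a quotient bundle of negative degree on general $C$. By the theory of relative Harder--Narasimhan filtrations (Mehta--Ramanathan) the maximal destabilizing subsheaf of $T_X^1$ with respect to $\gamma$ restricts to that of $T_X^1|_C$ for general $C$, so the obstruction is genuinely global. Dualizing, I obtain a nonzero saturated subsheaf $\FF\subseteq(\Omega_X^1)^{**}$ of some rank $r$ with $c_1(\FF)\cdot\gamma>0$; passing to the maximal destabilizing subsheaf I may assume $\FF$ is semistable. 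Here $r<n$, since $r=n$ would force $K_X\cdot\gamma=c_1(\Omega_X^1)\cdot\gamma>0$, contradicting the ampleness of $-K_X$. Taking determinants yields a rank-one reflexive subsheaf $\det\FF\hookrightarrow(\wedge^{r}\Omega_X^1)^{**}$ with $c_1(\det\FF)\cdot\gamma>0$.

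The crucial step, and the main obstacle, is to upgrade this purely numerical positivity against the single movable class $\gamma$ to positivity of Iitaka--Kodaira type, namely to conclude that $\det\FF$ is a Bogomolov sheaf with $\kappa(X,\det\FF)=r$. This cannot be formal: BDPP duality yields pseudo-effectivity only from positivity against \emph{all} movable classes, so one must invoke Miyaoka's generic semi-positivity machinery, run in characteristic $p$ via Frobenius pull-backs and the fact that high-slope subsheaves become $p$-closed foliations, to manufacture the required big subsheaf of $(\wedge^{r}\Omega_X^1)^{**}$. This is precisely the content of the argument for \cite[1.2(1)]{KMMT} underlying Theorem~\ref{T nef}, and it lies in the same circle of ideas as \cite[Theorem 6.1]{Miy87}. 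Once $\det\FF$ is known to be a Bogomolov sheaf, the structure theorem of Campana--Peternell for such sheaves furnishes a dominant meromorphic map $X\dashrightarrow Y$ onto an $r$-dimensional variety of general type with $r\geq 1$, so $\kappa(Y)=\dim Y\geq 0$, contradicting the rational connectedness of $X$. Hence $T_X^1|_C$ is nef for general $C$, which is exactly the asserted generic semi-positivity. I would finally note that the boundary $B$ enters only through the implication ``klt log Fano $\Rightarrow$ rationally connected,'' and that the argument goes through verbatim in every dimension, the general complete intersection curve always avoiding the singular locus of codimension $\geq 3$.
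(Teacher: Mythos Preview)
Your proposal has a genuine gap at the step you yourself flag as the ``main obstacle,'' and your description of what \cite{KMMT} actually does is incorrect. Miyaoka's characteristic-$p$ machinery produces $p$-closed foliations from high-slope subsheaves of the \emph{tangent} sheaf $T_X$, not of $\Omega_X$; the output is that $X$ is uniruled (or carries a rationally-connected foliation), which is precisely how one proves generic semi-positivity of $\Omega_X$ for non-uniruled varieties. In your situation the positive-slope subsheaf $\FF$ lives in $(\Omega_X^1)^{**}$, and nothing in that machinery lets you conclude $\kappa(\det\FF)\geq 0$, let alone $\kappa(\det\FF)=r$. If you pass to the annihilator in $T_X$ you do obtain a positive-slope subsheaf there, but the uniruledness this produces is already known for a variety of Fano type and yields no contradiction with rational connectedness.

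The argument the paper points to is entirely different and sidesteps this obstacle. It rests on two ingredients: the structure theorem for the closed cone of nef curves \cite[Corollary~1.3.5]{BCHM} and deformation theory of rational curves \cite[(1.3)]{KoMiMo}. Since $(X,B)$ is $\bQ$-factorial klt log Fano, the closed cone of nef curves is rational polyhedral, with each extremal ray generated by the class of a free rational curve arising as a general fiber of a Mori fiber space on some birational model. For any free rational curve $C$ lying in the smooth locus one has $T_X|_C$ nef, so every torsion-free quotient $T_X^1\to\mathcal{L}$ satisfies $c_1(\mathcal{L})\cdot[C]\geq 0$. Writing the complete-intersection class $H_1\cdots H_{n-1}$ as a non-negative combination of these extremal classes then gives $c_1(\mathcal{L})\cdot H_1\cdots H_{n-1}\geq 0$, which is exactly generic semi-positivity. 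No Bogomolov sheaves, no appeal to rational connectedness, and no reduction modulo $p$ are involved.
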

This theorem is implicated by \cite[Proof of 1.2 (1)]{KMMT}, 
 combining a structure theorem for the cone of nef curves (replacing \cite[Theorem-Definition 2.2]{KMMT} by \cite[Corollary 1.3.5]{BCHM}) and deformation theory of rational curves (\cite[(1.3) Corollary]{KoMiMo}).

 \begin{proof}[Proof of Theorem \ref{F pseff}] 
Recall that $X$ is of Fano type and with $\bQ$-factorial  terminal singularities (terminal singularities implies smooth in codimension $2$). Since $\mT_X$ is generically semi-positive by Theorem \ref{T nef} and $G$ is nef,
$\FF_X=\mT_X\oplus \OO_X(\Ne G)$ is again generically semi-positive.
Since $c_1(\FF_X)=-K_X+\Ne G$ is ample, $c_2(\FF_X)$ is pseudo-effective by Theorem \ref{c2 pseff}. \end{proof}

\subsection{Upper bound of Gorenstein indices}

In this subsection, we prove Theorem \ref{bound of r}. We start from the estimate of $(-K_X)\cdot c_2(X)$.
\begin{proof}[Proof of Theorem \ref{bound of r}(1)]
Note that 
$$c_2(\FF_X)=c_2(\mT_X\oplus \OO_X(N_\epsilon G))=c_2(X)-K_X\cdot \Ne G.$$
By Theorem \ref{F pseff}, $c_2(\FF_X)$ is pseudo-effective, and thus
$$
(-K_X+N_\epsilon G)\cdot (c_2(X)-K_X\cdot \Ne G)\geq 0.
$$
Hence 
$$
(-K_X)\cdot c_2(X)\geq -(-K_X+N_\epsilon G)\cdot (-K_X)\cdot \Ne G-\Ne G\cdot c_2(X).
$$
It suffices to prove the following lemma.
\begin{lem}The following inequalities hold:
\begin{enumerate}
\item $(-K_X+N_\epsilon G)\cdot (-K_X)\cdot \Ne G\leq \Ve;$
\item $G\cdot c_2(X)\leq 12+4\Ve/\Ne$.
\end{enumerate}
\end{lem}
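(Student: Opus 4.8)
The plan is to route all three intersection numbers through the ample divisor $A:=-K_X+\Ne G$, for which $A^3\le\Ve$ by the remark following Definition~\ref{GF}. Two positivity facts do all the work. First, since $X$ is of $\epsilon$-Fano type, $-K_X=-(K_X+B)+B$ is ample-plus-effective, hence big and in particular pseudo-effective. Second, for any nef $\bQ$-Cartier divisors $L_1,L_2$ and any pseudo-effective divisor $D$ on $X$ one has $L_1\cdot L_2\cdot D\ge 0$ (reduce to $D$ effective, pass to the normalization of a prime component, and use that nef restricts to nef). As $G$ is nef these give $A^2\cdot(-K_X)\ge 0$, $A\cdot G\cdot(-K_X)\ge 0$, $A\cdot G^2\ge 0$ and $(-K_X)\cdot G^2\ge 0$; moreover $G^3=0$ always ($G$ being numerically pulled back from $Z$, with $\dim Z\le 2$), and $G^2=0$ when $\dim Z=1$. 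Part (1) is then immediate: writing $-K_X=A-\Ne G$, its left-hand side is $\Ne\,A^2\cdot G-\Ne^2\,A\cdot G^2\le\Ne\,A^2\cdot G=A^3-A^2\cdot(-K_X)\le A^3\le\Ve$.

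For (2) I would first compute on the smooth surface $G$. A general member of a base-point-free linear system misses the isolated singularities of $X$, so $G$ lies in the smooth locus and the extension $0\to T_G\to T_X|_G\to N_{G/X}=\OO_G(G)\to 0$ yields, via the Whitney formula, $c_2(X)\cdot G=c_2(G)+(-K_G)\cdot(G|_G)$; using adjunction $K_G=(K_X+G)|_G$ and $G^3=0$ this becomes $c_2(X)\cdot G=c_2(G)+(-K_X)\cdot G^2$. Feeding in Noether's formula $c_2(G)=12\chi(\OO_G)-K_G^2$ and adjunction once more, $K_G^2=(-K_X)^2\cdot G-2(-K_X)\cdot G^2$, I would reach the clean identity
\[
c_2(X)\cdot G=12\,\chi(\OO_G)-(-K_X)^2\cdot G+3(-K_X)\cdot G^2 .
\]
Since $G$ is a del Pezzo surface or a conic bundle over a curve, it is uniruled, so $h^2(\OO_G)=h^0(K_G)=0$ and hence $\chi(\OO_G)=1-h^1(\OO_G)\le 1$; thus $c_2(X)\cdot G\le 12-(-K_X)^2\cdot G+3(-K_X)\cdot G^2$.

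It then remains to bound $-(-K_X)^2\cdot G+3(-K_X)\cdot G^2$ by $4\Ve/\Ne$. When $\dim Z=1$ this is just $-(-K_X)^2\cdot G=-(-K_G)^2\le 0$ (as $-K_G=-K_X|_G$ is ample), so nothing more is needed. When $\dim Z=2$, the identity $A^2\cdot G=(-K_X)^2\cdot G+2\Ne(-K_X)\cdot G^2\ge 0$ gives $-(-K_X)^2\cdot G\le 2\Ne(-K_X)\cdot G^2$, reducing everything to a bound for $(-K_X)\cdot G^2=A\cdot G^2$; and $A\cdot G^2\le\tfrac1\Ne A^2\cdot G\le\tfrac1{\Ne^2}A^3\le\Ve/\Ne^2$ by dividing by $\Ne$ twice (from $A\cdot G\cdot(-K_X)\ge 0$ and then $A^2\cdot(-K_X)\ge 0$), so altogether $c_2(X)\cdot G\le 12+(2\Ne+3)\Ve/\Ne^2\le 12+4\Ve/\Ne$. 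I do not expect a genuinely hard step here: the work is bookkeeping — checking that the Chern-class computation is legitimate for the singular $X$ (it is, because $G$ is general), using the correct estimate $\chi(\OO_G)\le 1$ rather than a naive $\chi(\OO_G)=1$, and keeping signs straight so that only quantities controlled by $A^3\le\Ve$ survive the positivity estimates.
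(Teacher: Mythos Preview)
Your argument is correct and follows essentially the same route as the paper: route all intersection numbers through $A=-K_X+\Ne G$ via nef/pseudo-effective positivity, and for (2) use the tangent sequence on $G$, Noether's formula, and $\chi(\OO_G)\le 1$. One small numerical point: your final step $(2\Ne+3)\Ve/\Ne^2\le 4\Ve/\Ne$ requires $\Ne\ge 2$; the paper instead uses the slightly sharper estimate $(-K_X)\cdot A\cdot G\ge 0$ (in place of your $A^2\cdot G\ge 0$) to obtain $-(-K_X)^2\cdot G\le \Ne\,(-K_X)\cdot G^2$, giving a factor $\Ne+3$ rather than $2\Ne+3$, so that $(\Ne+3)/\Ne^2\le 4/\Ne$ holds for all $\Ne\ge 1$.
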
 
\begin{proof}Recall that $-K_X$ is big,  $G$ is nef with $G^3=0$, and $-K_X+N_\epsilon G$ is ample with $(-K_X+N_\epsilon G)^3\leq \Ve.$

For statement (1),
\begin{align*}
{}&(-K_X+N_\epsilon G)\cdot (-K_X)\cdot \Ne G\\
\leq {}& (-K_X+N_\epsilon G)\cdot (-K_X+\Ne G)\cdot \Ne G\\
\leq {}& (-K_X+N_\epsilon G)^3\leq \Ve.
\end{align*}

Now we prove statement (2).

If $\dim Z=1$, then $G$ is a del Pezzo surface and $G\cdot c_2(X)=c_2(G)\leq 11$.

If $\dim Z=2$, by the exact sequence 
$$
0\to \mT_G\to \mT_X|_G\to \mathcal{N}_{G/X}\to 0,
$$
we have
\begin{align*}
{}&G\cdot c_2(X)\\
= {}& c_2(G)+c_1(G)\cdot c_1(\mathcal{N}_{G/X})\\
= {}& 12\chi(\OO_G)-K_G^2-K_G\cdot G|_G.
\end{align*}
Note that $G$ is a conic bundle over $H$, hence $\chi(\OO_X)=1-g(H)\leq 1$. On the other hand, 
\begin{align*}
{}&-K_G^2-K_G\cdot G|_G\\
= {}& -(K_X+G)^2\cdot G -(K_X+G)\cdot G^2\\
= {}& -K_X\cdot (K_X+3G)\cdot G\\
\leq {}&-K_X\cdot (\Ne+3)G\cdot G\\
= {}&(-K_X+\Ne G)\cdot (\Ne+3)G^2\\
\leq {}&(-K_X+\Ne G)\cdot \frac{\Ne+3}{N_\epsilon^2}(-K_X+\Ne G)^2\\
\leq {}&\frac{\Ne+3}{N_\epsilon^2} \Ve \leq  \frac{4\Ve}{N_\epsilon}.
\end{align*}
Hence $G\cdot c_2(X)\leq 12+{4\Ve}/{N_\epsilon}$.
\end{proof}
By this lemma, 
$$
(-K_X)\cdot c_2(X)\geq -(5\Ve +12\Ne).
$$
Hence Theorem \ref{bound of r}(1) is proved.
\end{proof}

By  Reid's Riemann--Roch formula, we can get the upper bound of Gorenstein indices. This method  highly depends on the classification of $3$-dimensional terminal singularities. 

\begin{proof}[Proof of Theorem \ref{bound of r}(2)]Recall that $X$ has at most terminal singularities.  Recall that every terminal singularity in dimension $3$ can be deformed to a collection of cyclic quotient terminal singularities. The {\it basket} of singularities of $X$
 is the set of all such cyclic quotient terminal singularities. We denote it by $(b, r)$ 
if a cyclic quotient terminal singularity is of type $\frac{1}{r}(1,-1,b)$ for integers $b$ and $r$. Hence a basket is a set (allowing multiplicities) of the form 
$
\{(b_i, r_i)|i\in I\}.
$
Note that since $r_i$ is the local index of singularities, $\lcm\{r_i\}K_X$ is a Cartier divisor.
By Reid's Riemann--Roch formula (cf. \cite[Lemmas 2.2, 2.3]{Ka=0} or \cite[(10.3)]{YPG}), we have
$$
\chi(\OO_X)=\frac{1}{24}(-K_X)\cdot c_2(X)+\frac{1}{24}\sum(r_i-\frac{1}{r_i}),
$$
where $r_i$ runs over the basket of singularities.
Note that $\chi(\OO_X)=1$ by Kawamata--Viehweg vanishing theorem since $X$ is of Fano type.
Hence by Theorem \ref{bound of r}(1),
$$
\sum(r_i-\frac{1}{r_i})\leq 24+M_\epsilon.
$$
In particular, $r_i\leq 24+M_\epsilon$. Hence $(24+M_\epsilon)!$ is divisible by $\lcm\{r_i\}$, which implies that $(24+M_\epsilon)!K_X$ is Cartier.
\end{proof}

\subsection{Proof of Theorem \ref{main}}\label{subsection proof main}

\begin{proof}[Proof of Theorem \ref{main}]
Let $X$ be a $3$-fold of $\epsilon$-Fano type with a Mori fibration $f:X\to Z$. If $\dim Z=0$, then $X$ is a $\bQ$-factorial terminal Fano $3$-fold with Picard number one, which is bounded by Kawamata \cite{K}. So we only need to consider the case when $\dim Z>0$. 

Keep the notation in Theorem \ref{NV}.
By Theorem \ref{bound of r}, $L:=r_\epsilon(-K_X+\Ne G)$ is a Cartier ample divisor. Recall that $X$ is of Fano type. By Koll\'ar's effective base point free theorem (cf. \cite[1.1 Theorem, 1.2 Lemma]{EBPF}), $720L$ is base point free and $4321L$ is very ample. On the other hand, $L^3\leq r_\epsilon^3\Ve$. Hence $X$ is a subvariety of projective spaces with bounded degree. Such $X$ forms a bounded family by the boundedness of Chow variety.
\end{proof}

  \section*{Acknowledgements}
The author would like to thank the referees for suggestions on the expression of this paper.

\end{document}